  \def\citeapos#1{\citeauthor{#1}'s (\citeyear{#1})}
\newcommand{\RR}{\mathds{R}}
\newcommand{\NN}{\mathds{N}}
\newcommand{\ds}{\displaystyle}
\newcommand{\E}{\operatorname{\mathds{E}}}
\newcommand{\Var}{\operatorname{\mathsf{Var}}}
\newcommand{\card}{\operatorname{card}}
\newcommand{\tr}{\operatorname{tr}}
\newcommand{\diag}{\operatorname{diag}}
\newcommand{\rank}{\operatorname{rank}}
\newcommand{\du}{\operatorname{d}}
\newcommand{\ba}{\bm{\alpha}}
\newcommand{\bb}{\bm{\beta}}
\newcommand{\bg}{\bm{\gamma}}
\newcommand{\ud}{{\rm d}}
\newcommand{\ta}{T}%\textrm{\rm t}}
\newcommand{\approxn}{\stackrel{n}{\approx}}
\newcommand{\law}{\xlongequal{\textrm{\rm d}}}
\newcommand{\h}[1]{\hat{#1}}
 \journalname{TEST}
\let\originalleft\left
\let\originalright\right
\renewcommand{\left}{\mathopen{}\mathclose\bgroup\originalleft}
\renewcommand{\right}{\aftergroup\egroup\originalright}
\begin{document}

\title{Uniform Integrability of the OLS Estimators, and the Convergence of their Moments
       \thanks{Both authors acknowledge support provided by the Department of Biostatistics and the Jacobs School of Medicine and Biomedical Sciences, University at Buffalo (in the form of start-up package to the second author). In addition, the authors would like to thank the editor and reviewers of this manuscript for suggestions that improved the quality of the paper.}
      }
%\subtitle{Do you have a subtitle?\\ If so, write it here}

%\titlerunning{}        % if too long for running head

\author{Georgios Afendras \and
        Marianthi Markatou
}

%\authorrunning{Short form of author list} % if too long for running head

\institute{G. Afendras \at
              Department of Biostatistics and Jacobs School of Medicine and Biomedical Sciences, University at Buffalo, 811 Kimball Tower, Buffalo, NY 14214, USA \\
              Tel.: +1 (716) 829-5094 \\
              Fax: +1 (716) 829-2200 \\
              \email{gafendra@buffalo.edu}           %  \\
%             \emph{Present address:} of F. Author  %  if needed
           \and
           M. Markatou \at
              Department of Biostatistics and Jacobs School of Medicine and Biomedical Sciences, University at Buffalo, 726 Kimball Tower, Buffalo, NY 14214, USA \\
              %Tel.: +1 (716) 829-2894 \\
              %Fax: +1 (716) 829-2200 \\
              \email{markatou@buffalo.edu}
}

\date{}%Received: date / Accepted: date}
% The correct dates will be entered by the editor

\maketitle
\vspace{-.1ex}

\begin{abstract}
The problem of convergence of moments of a sequence of random variables to the moments of its asymptotic distribution is important in many applications. These include the determination of the optimal training sample size in the cross validation estimation of the generalization error of computer algorithms, and in the construction of graphical methods for studying dependence patterns between two biomarkers. In this paper we prove the uniform integrability of the ordinary least squares estimators of a linear regression model, under suitable assumptions on the design matrix and the moments of the errors. Further, we prove the convergence of the moments of the estimators to the corresponding moments of their asymptotic distribution, and study the rate of the moment convergence. The canonical central limit theorem corresponds to the simplest linear regression model. We investigate the rate of the moment convergence in canonical central limit theorem proving a sharp improvement of \citeauthor{Bahr1965}'s (\citeyear{Bahr1965}) theorem.
\keywords{Linear regression model \and Moment convergence \and Rate of convergence \and Uniform integrability}
% \PACS{PACS code1 \and PACS code2 \and more}
\subclass{62J05 \and 62E20 \and 60E15 \and 60F05 \and 05A10}
\end{abstract}

\section{Introduction}
\label{sec:intro}

Regression models play a central role in statistics, for prediction and statistical inference. The most famous and, because of its extensive use, important model is the linear regression model,
\begin{equation}
\label{eq.model}
\bm{Y}=\mathbf{X}\bm{\beta}+\bm{\varepsilon},
\end{equation}
where $\bm{Y}=(Y_1,\ldots,Y_n)^\ta$ is the vector of response variables, $\mathbf{X}=(\bm{x}_1,\ldots,\bm{x}_n)^\ta\in\RR^{n\times p}$ is the design matrix,
$\bm{\beta}=(\beta_0,\beta_1,\ldots,\beta_{p-1})^\ta\in\RR^p$ is the parameter vector and $\bm{\varepsilon}=(\varepsilon_1,\ldots,\varepsilon_n)^\ta$ is the vector of the errors, where the $\varepsilon_i$'s are independent and $\varepsilon_i$ has $\E(\varepsilon_i)=0$, $\Var(\varepsilon_i)=\sigma^2$, and distribution function $F_i$. If $\rank(\mathbf{X})=p$, the Ordinary Least Squares estimator of $\bb$ is
\begin{equation}
\label{eq.OLS}
\bm{b}=(\mathbf{X}^\ta\mathbf{X})^{-1}\mathbf{X}^{\ta}\bm{Y}.
\end{equation}

Let $\mathbf{M}_n=(m_{ij;n})\doteq \mathbf{X}^\ta \mathbf{X}\in\RR^{p\times p}$ and $\mathbf{D}_n\doteq \diag(\sqrt{m_{11;n}},\ldots,\sqrt{m_{pp;n}})$. Suppose
{\rm(i)} $m_{ii;n}\to \infty$ as $n\to\infty$, $i=1,\ldots,p$,
{\rm(ii)} $x_{i;n+1}^2/m_{ii;n}\to0$ as $n\to\infty$, $i=1,\ldots,p$,
{\rm(iii)} $\mathbf{D}_n^{-1}\mathbf{M}_n \mathbf{D}_n^{-1}\to \mathbf{V}^{-1}$ as $n\to\infty$, where $\mathbf{V}^{-1}$ is nonsingular, and
{\rm(iv)} $\sup_{i=1,2,\ldots}\int_{|u|>c}u^2\ud F_i(u)\to 0$ as $c\to\infty$ [if $\varepsilon_i$'s are identically distributed, this assumption follows]. Then, $\mathbf{D_n}(\bm{b}-\bm{\beta})\xlongrightarrow{\rm d}N_p(\bm{0},\sigma^2\mathbf{V})$ as $n\to\infty$, see \citet[][Sec.~2.6]{Anderson1971}. Note that the most general normalization is described in \citet[][Sec.~1.11]{Mynbaev2011}. As a special case of the aforementioned conditions and the associated normality result we discuss the following situation. Suppose {\rm(iii$'$)} $n^{-1}\mathbf{X}^\ta \mathbf{X}\to \mathbf{V}^{-1}$ as $n\to\infty$, where $\mathbf{V}^{-1}\in\RR^{p\times p}$ is nonsingular. Then, Assumptions {\rm(i)}, {\rm(ii)} are satisfied; and Assumption {\rm(iii)} is satisfied with $\mathbf{\Delta}^{-1}\mathbf{V}^{-1}\mathbf{\Delta}^{-1}$ instead of $\mathbf{V}^{-1}$, where $\mathbf{\Delta}$ is the diagonal $p\times p$ matrix with elements the square roots of the diagonal elements of $\mathbf{V}^{-1}$. Writing  $\sqrt{n}(\bm{b}-\bm{\beta})\equiv\sqrt{n}\mathbf{I}_p(\bm{b}-\bm{\beta})=\sqrt{n}\mathbf{D_n}^{-1}\cdot \mathbf{D_n}(\bm{b}-\bm{\beta})$, where $\mathbf{I}_p$ is the $p\times p$ identity matrix, an application of Slutsky's theorem gives $\sqrt{n}(\bm{b}-\bm{\beta})\xlongrightarrow{\rm d}N_p(\bm{0},\sigma^2\mathbf{V})$ as $n\to\infty$. Therefore, under {\rm(i)}--{\rm(iv)} or {\rm(iii$'$)}, {\rm(iv)},
\begin{equation}
\label{eq.N_p}
{\mathbf{\Gamma}_n}(\bm{b}-\bm{\beta})\xlongrightarrow{\rm d}N_p(\bm{0},\sigma^2\mathbf{V})
\ \ \textrm{as}\ \ n\to\infty,
\ \ \ \textrm{where}\ \ {\mathbf{\Gamma}_n}=\mathbf{D_n} \ \textrm{or} \ \sqrt{n}\mathbf{I}_p \ \textrm{respectively}.
\end{equation}

Let $\{Y_n\}$ be an independent and identically distributed (iid) sequence from a distribution $F$ with mean $\mu$ and variance $0<\sigma^2<\infty$. The canonical Central Limit Theorem (CLT) says that, the sequence of the standardized averages, $Z_n=\sqrt{n}\frac{\bar{Y}-\mu}{\sigma}$, where $\bar{Y}=\frac{1}{n}\sum_{j=1}^{n}Y_j$, converges in distribution to the standard normal random variable $Z$. This result can be represented as the simplest linear regression model $\bm{Y}=\bm{1}\mu+\bm{\varepsilon}$, where $\bm{1}=(1,\ldots,1)^\ta\in\RR^n$ (one can easily see that assumption {\rm(iii$'$)} is satisfied), since the ordinary least squares (OLS) estimator of $\mu$ is $\h{\mu}=\bar{Y}$. The rate of convergence of the absolute moments of $Z_n$ to the absolute moments of $Z$ (of order $r$, for specific positive real numbers of $r$) has been studied by various authors \citep[see][among others]{BE1965,BR1976,Hall1978,Hall1982}. \citet{Bahr1965} addressed the problem of the convergence of the moments of $Z_n$ to the corresponding moments of $Z$, and provided their rate of convergence. Specifically, he proved that, if $\E|Y_i|^r<\infty$ for some positive integer $r$, then
\begin{equation}
\label{eq.rate_Bahr}
\E(Z_n^r)-\E(Z^r)=O(n^{-1/2}).
\end{equation}

\citet{AM2016} study the problem of selecting the optimal value of the training sample size in the cross validation estimation of the generalization error of computer algorithms. When the decision rule is given by a linear regression model and the loss function is the squared error loss, their analysis requires to know the cases in which the moments of the OLS estimators converge to the corresponding moments of their asymptotic distribution, thereby giving rise to the work presented here. %Other potential applications of the results presented here pertain to the construction of graphical methods for studying the patterns of dependence between two biomarkers.

This paper is organized as follows. Section \ref{sec:OLS} proves the uniform integrability of the OLS estimators, under a natural condition on the moments of the errors. In Section \ref{sec:rate} we improve \citeauthor{Bahr1965}'s Theorem; and, in general, we study the rate of convergence of the moments of the OLS estimators to the corresponding moments of their asymptotic distribution. Specifically, we study the rate of convergence of the second and third moments of the OLS estimators. We obtain that, the rate of convergence depends on the power of the moment (even or odd) as well as the design matrix $\mathbf{X}$. Section \ref{sec.disc} concludes with a short summary and discussion of the results.

\section{Uniform Integrability of the OLS estimators}
\label{sec:OLS}
In view of \eqref{eq.N_p}, a natural question arises: {\it When do the moments of ${\mathbf{\Gamma}_n}(\bm{b}-\bm{\beta})$ converge, as $n\to\infty$, to the corresponding moments of its asymptotic distribution?}

Let $\ba\in\RR^p$ and let us consider the sequence of random variables (rv's) $\xi_n\doteq\ba^\ta {\mathbf{\Gamma}_n}(\bm{b}-\bm{\beta})$. From \eqref{eq.N_p}, using {\it delta}-method \citep[see][p.~25]{Vaart1998}, we have that
\begin{equation}
\label{eq.N-xi}
\xi_n\xlongrightarrow{\rm d}\xi \ \ \textrm{as} \ \ n\to\infty,
\quad\textrm{where} \ \xi\sim N(0,\sigma^2\ba^\ta\mathbf{V} \ba).
\end{equation}
We want $\E|\xi_n|^r\to\E|\xi|^r$ as $n\to\infty$, for some $r>0$. Obviously, if $\ba=\bm{0}$, the result is trivial; thus, we study the nontrivial cases in which $\ba\ne\bm{0}$. From \eqref{eq.OLS}, $\xi_n=\sum_{i=1}^{n}\kappa_i\varepsilon_i$, where $\kappa_i=\ba^{\ta}{\mathbf{\Gamma}_n}(\mathbf{X}^\ta \mathbf{X})^{-1}\bm{x}_i$. This relation shows that it is required $\E|\varepsilon_i|^r<\infty$. On the other hand the relation $\E|\xi_n|^r<\infty$ does not imply that $\E|\xi_n|^r\to\E|\xi|^r$ as $n\to\infty$. The desired convergence of the moments of $\xi_n$ is true if $\{|\xi_n|^r\}$ is uniformly integrable \citep[see, e.g.,][Theo.~25.12, p.~338; cf \citeauthor{Loeve1977} \citeyear{Loeve1977}, Sec.~$^*$11.4, pp.~184--187]{Billingsley1995}. Proving the uniform integrability of the OLS estimators, answers the aforementioned question.

\begin{definition}
\label{def.LM-even}
Let $R>0$ be fixed. We define $\mathcal{LM}(R)$ to be the family of models containing models of the form \eqref{eq.model} such that {\rm(i)}--{\rm(iv)} or {\rm(iii$'$)}, {\rm(iv)} hold and $\epsilon^R<\infty$, where $\epsilon^R\doteq\sup_{i=1,2,\ldots}\big\{\E|\varepsilon_i|^R\big\}$.
\end{definition}

\begin{theorem}
\label{theo.UI1}
Let $R\ge2$ and the model belongs to $\mathcal{LM}(R)$. Then for each $0<r<R$ the sequence of rv's $|\xi_n|^r$ in \eqref{eq.N-xi} is uniformly integrable; and thus,
\[
\E|\xi_n|^r\to\E|\xi|^r,
\ \ \textrm{and, if $r$ is a positive integer} \ \
\E(\xi_n^r)\to\E(\xi^r) \ \ \textrm{as} \ \ n\to\infty.
\]
\end{theorem}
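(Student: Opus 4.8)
The plan is to reduce the asserted convergence of moments to a uniform integrability statement, and then to establish that uniform integrability via a uniform bound on a slightly higher absolute moment. Since $\xi_n\xlongrightarrow{\rm d}\xi$ by \eqref{eq.N-xi}, the moment-convergence theorem cited just before the statement yields $\E|\xi_n|^r\to\E|\xi|^r$ (and $\E(\xi_n^r)\to\E(\xi^r)$ for integer $r$) as soon as $\{|\xi_n|^r\}$ is uniformly integrable. For the latter I would use the standard criterion that a uniform bound on a higher moment forces uniform integrability: if $\sup_n\E|\xi_n|^{r'}<\infty$ for some $r'>r$, then $\{|\xi_n|^r\}$ is uniformly integrable. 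The whole problem thus reduces to exhibiting a single exponent $r'\in(r,R]$ for which the $r'$-th absolute moments of $\xi_n$ remain bounded in $n$.

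Next I would exploit the representation $\xi_n=\sum_{i=1}^{n}b_i\varepsilon_i$, with $b_i=\sqrt{n}\,\ba^\ta(X^\ta X)^{-1}\bm{x}_i$, a weighted sum of iid mean-zero errors. Two facts about the weights drive the argument. First, using $\sum_{i=1}^{n}\bm{x}_i\bm{x}_i^\ta=X^\ta X$, a direct computation gives
\[
\sum_{i=1}^{n}b_i^2=n\,\ba^\ta(X^\ta X)^{-1}\ba=\ba^\ta\Big(\tfrac{1}{n}X^\ta X\Big)^{-1}\ba\longrightarrow\ba^\ta V\ba,
\]
so $\sum_i b_i^2$ is bounded in $n$ (its limit being precisely the asymptotic variance of $\xi_n$ divided by $\sigma^2$). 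Second, by the Cauchy--Schwarz inequality, $b_i^2\le\big(n\,\ba^\ta(X^\ta X)^{-1}\ba\big)\big(\bm{x}_i^\ta(X^\ta X)^{-1}\bm{x}_i\big)$, and since $\bm{x}_i^\ta(X^\ta X)^{-1}\bm{x}_i$ is the $i$-th leverage, the generalized Noether condition guaranteed by Wu's Lemma under \eqref{eq.V} forces $\max_{1\le i\le n}|b_i|\to0$.

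With these two facts in hand I would invoke a moment inequality for sums of independent random variables. Choosing $r'\in(r,R]$ with $r'\ge2$, which is always possible because $R\ge2$, the Marcinkiewicz--Zygmund inequality followed by Minkowski's inequality applied in $L^{r'/2}$ gives
\[
\E|\xi_n|^{r'}\le C_{r'}\,\E\Big(\sum_{i=1}^{n}b_i^2\varepsilon_i^2\Big)^{r'/2}\le C_{r'}\,\E|\varepsilon|^{r'}\Big(\sum_{i=1}^{n}b_i^2\Big)^{r'/2}.
\]
Since $\E|\varepsilon|^{r'}<\infty$ because $r'\le R$, and $\sum_i b_i^2$ converges, the right-hand side is bounded uniformly in $n$, which is exactly the required $\sup_n\E|\xi_n|^{r'}<\infty$. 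Equivalently one may use Rosenthal's inequality, whose extra term $\E|\varepsilon|^{r'}\sum_i|b_i|^{r'}$ is handled through $\sum_i|b_i|^{r'}\le(\max_i|b_i|)^{r'-2}\sum_i b_i^2\to0$, which is where the negligibility $\max_i|b_i|\to0$ is used.

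I expect the crux to be the passage from the design-matrix hypothesis \eqref{eq.V} to the quantitative control of the weights $b_i$, namely verifying that $\sum_i b_i^2$ stays bounded while $\max_i|b_i|\to0$, since this is what ties the abstract moment inequality to the regression structure; the moment inequality itself and the uniform-integrability plus convergence step are then essentially mechanical. A minor bookkeeping point is the case $r<2$, but it dissolves once one notes that $r'$ may always be taken to be at least $2$.
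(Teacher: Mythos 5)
Your proposal is correct and follows essentially the same route as the paper's proof: uniform integrability of $|\xi_n|^r$ via a uniform bound on a higher absolute moment, obtained through the Marcinkiewicz--Zygmund and Minkowski inequalities applied to the representation $\xi_n=\sum_{i=1}^n b_i\varepsilon_i$, with $\sum_i b_i^2$ controlled by assumption \eqref{eq.V}, and then the cited moment-convergence theorem. The only (harmless) deviations are that you unify the paper's two cases ($0<r<2$ handled via an exact second-moment computation, $2\le r<R$ via the exponent $R$) by always choosing $r'\ge2$, and that you use the exact identity $\sum_i b_i^2=n\,\ba^\ta(X^\ta X)^{-1}\ba\to\ba^\ta V\ba$, where the paper instead derives the slightly weaker Cauchy--Schwarz/hat-matrix bound $\sum_i b_i^2\le p\,\ba^\ta\big(n(X^\ta X)^{-1}\big)\ba$.
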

\begin{proof}
For $0<r<2$, set $\delta=(2-r)/r>0$. So,
$\E\left(|\xi_n|^r\right)^{1+\delta}=\E|\xi_n|^2
=\E\big(
\sum_{i=1}^{n}\kappa_i^2\varepsilon_i^2
+2\mathop{\sum\sum}_{1\le{i}<j\le{n}}\kappa_i\kappa_j\varepsilon_i\varepsilon_j
\big)
=\sigma^2\sum_{i=1}^{n}\kappa_i^2$.
Set $\bg^T=\ba^T {\mathbf{\Gamma}_n}$. Using Cauchy-Schwarz inequality, we get $\kappa_i^2=\big(\bg^{\ta}(\mathbf{X}^\ta \mathbf{X})^{-1}\bm{x}_i\big)^2\le \big(\bg^{\ta}(\mathbf{X}^\ta \mathbf{X})^{-1}\bg\big)\big(\bm{x}_i^{\ta}(\mathbf{X}^\ta \mathbf{X})^{-1}\bm{x}_i\big)$. Using the relation $\sum_{i=1}^{n}h_{ii}=\tr(\mathbf{H})=p$, where $\mathbf{H}=\mathbf{X}(\mathbf{X}^\ta \mathbf{X})^{-1}\mathbf{X}^\ta$ is the hat matrix of the model \citep[see, e.g.,][]{CH1998},
$
\sum_{i=1}^{n}\kappa_i^2\le \ba^{\ta}\big({\mathbf{\Gamma}_n} \mathbf{M}_n^{-1} {\mathbf{\Gamma}_n}\big)\ba\sum_{i=1}^{n}h_{ii}=p\ba^{\ta}\big({\mathbf{\Gamma}_n} \mathbf{M}_n^{-1} {\mathbf{\Gamma}_n}\big)\ba
$.
Thus, $\E\left(|\xi_n|^r\right)^{1+\delta}\le p\sigma^2\ba^{\ta}\big({\mathbf{\Gamma}_n} \mathbf{M}_n^{-1} {\mathbf{\Gamma}_n}\big)\ba$.
Assumption {\rm(iii)} or {\rm(iii$'$)} gives $p\sigma^2\ba^{\ta}\big({\mathbf{\Gamma}_n} \mathbf{M}_n^{-1} {\mathbf{\Gamma}_n}\big)\ba \to p\sigma^2\ba^{\ta}\mathbf{V}\ba<\infty$. So, $p\sigma^2\ba^{\ta}\big({\mathbf{\Gamma}_n} \mathbf{M}_n^{-1} {\mathbf{\Gamma}_n}\big)\ba$ is a bounded sequence, which imply that $\E\left(|\xi_n|^r\right)^{1+\delta}$ is also bounded; and hence, $\sup_n\E\left(|\xi_n|^r\right)^{1+\delta}<\infty$. Thus, $\{|\xi_n|^r\}$ is uniformly integrable \citep[see, e.g.,][p.~338]{Billingsley1995}.
%$
%\E(|\xi_n|^r)^{1+\delta}\le p\sigma^2\ba^{\ta}\big({\mathbf{\Gamma}_n} \mathbf{M}_n^{-1} {\mathbf{\Gamma}_n}\big)\ba \to p\sigma^2\ba^{\ta}\mathbf{V}\ba<\infty
%$.
%Therefore,
%$
%0\le\liminf_{n\to\infty}\E(|\xi_n|^r)^{1+\delta}\le\limsup_{n\to\infty}\E(|\xi_n|^r)^{1+\delta}\le p\sigma^2\ba^{\ta}\mathbf{V}\ba<\infty
%$,
%from which it follows that $\sup_n\E\left(|\xi_n|^r\right)^{1+\delta}<\infty$. Thus, the sequence of rv's $|\xi_n|^r$ is uniformly integrable \citep[see, e.g.,][p.~338]{Billingsley1995}.

For $2\le r<R$ (only if $R>2$), set $\delta=(R-r)/r$; so, $\E\left(|\xi_n|^r\right)^{1+\delta}=\E|\xi_n|^R$. An application of Marcinkiewicz-Zygmund inequality gives
$
\E|\xi_n|^R\le C_R\E\left(\sum_{i=1}^{n}\kappa_i^2\varepsilon_i^2\right)^{R/2}
$,
where $C_R>0$ is a positive constant which depends only on $R$. By using Minkowski inequality, since $R/2>1$, it follows that
$
\E\left(\sum_{i=1}^{n}\kappa_i^2\varepsilon_i^2\right)^{R/2}
\le \left(\sum_{i=1}^{n}\kappa_i^2\left(\E|\varepsilon_i|^R\right)^{2/R}\right)^{R/2}
\le\epsilon^R\left(\sum_{i=1}^{n}\kappa_i^2\right)^{R/2}
\le \left(p\ba^{\ta}\big({\mathbf{\Gamma}_n} \mathbf{M}_n^{-1} {\mathbf{\Gamma}_n}\big)\ba\right)^{R/2}\epsilon^R
$.
Thus, the following relation arises\linebreak
$
\E|\xi_n|^R\le C_R \left(p\ba^{\ta}\big({\mathbf{\Gamma}_n} \mathbf{M}_n^{-1} {\mathbf{\Gamma}_n}\big)\ba\right)^{R/2}\epsilon^R
\to C_R \left(p\ba^{\ta}\mathbf{V}\ba\right)^{R/2}\epsilon^R<\infty
$.
As above, we have that the sequence $|\xi_n|^r$ is uniformly integrable.

Finally, if $r$ is a positive integer, the sequence $\xi_n^r$ is well defined and the fact that $|\xi_n^r|=|\xi_n|^r$ completes the proof.
\hfill$\square$
\end{proof}

Next, we generalize Theorem \ref{theo.UI1} to the $k$-dimensional case. Let $\ba_j\in\RR^p\smallsetminus\{\bm{0}\}$, $j=1,\ldots,k$, and let us consider the random vector $\bm{\xi}_n$ with components $\xi_{n,j}=\ba_j^\ta {\mathbf{\Gamma}_n}(\bm{b}-\bm{\beta})$, $j=1,\ldots,k$; that is,
$
\bm{\xi}_n\doteq \mathbf{A}{\mathbf{\Gamma}_n}(\bm{b}-\bm{\beta})
$,
$\mathbf{A}=(\ba_1,\ldots,\ba_k)^\ta$. Using {\it delta}-method, \eqref{eq.N_p} gives
\begin{equation}
\label{eq.N-xi-k}
\bm{\xi}_n\xlongrightarrow{\rm d}\bm{\xi} \ \ \textrm{as} \ \ n\to\infty,
\quad\textrm{where} \ \ \bm{\xi}\sim N_k(\bm{0},\sigma^2\mathbf{A}\mathbf{V}\mathbf{A}^\ta).
\end{equation}

\begin{theorem}
\label{theo.UI2}
Let $R\ge2$ and the model belongs to $\mathcal{LM}(R)$. Then, for each $r_1,\ldots,r_k\ge0$ such that $r_1+\cdots+r_k<R$ the sequence of rv's $\prod_{j=1}^k|\xi_{n,j}|^{r_j}$ is uniformly integrable [$\xi_{n,j}$ as in \eqref{eq.N-xi-k}]; and thus,
\[
\E\prod\limits_{j=1}^k|\xi_{n,j}|^{r_j}\to\E\prod\limits_{j=1}^k|\xi_{j}|^{r_j} \ \ \textrm{as} \ n\to\infty,
\]
where $\xi_j$ are the components of $\bm{\xi}$ in \eqref{eq.N-xi-k}; and if $r_1,\ldots,r_k$ are nonnegative integers
\[
\E\prod\limits_{j=1}^k\xi_{n,j}^{r_j}\to\E\prod\limits_{j=1}^k\xi_{j}^{r_j} \ \ \textrm{as} \ n\to\infty.
\]
\end{theorem}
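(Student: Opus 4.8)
The plan is to treat Theorem~\ref{theo.UI2} as the mixed-moment analogue of Theorem~\ref{theo.UI1}: I would first show that the nonnegative sequence $W_n\doteq\prod_{j=1}^k|\xi_{n,j}|^{r_j}$ is uniformly integrable, and then upgrade the convergence in distribution \eqref{eq.N-xi-k} to convergence of the mixed moments. Both the absolute-moment conclusion and the integer (signed) conclusion will follow from the single fact that $\{W_n\}$ is uniformly integrable, because $\big|\prod_j\xi_{n,j}^{r_j}\big|=W_n$ whenever the $r_j$ are integers.

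The first step is to reduce the mixed moment back to the one-dimensional directional moments already controlled in Theorem~\ref{theo.UI1}. Writing $s\doteq r_1+\cdots+r_k$ and using the weighted arithmetic--geometric mean inequality with the convex weights $r_j/s$, one has the pointwise bound
\[
W_n=\prod_{j=1}^k\big(|\xi_{n,j}|^{s}\big)^{r_j/s}\le\sum_{j=1}^k\frac{r_j}{s}\,|\xi_{n,j}|^{s}.
\]
Each $\xi_{n,j}=\sqrt{n}\,\ba_j^\ta(\h{\bb}-\bb)$ is exactly a sequence of the form treated in Theorem~\ref{theo.UI1} (with $\ba=\ba_j$), so $\{|\xi_{n,j}|^{s}\}$ is uniformly integrable as soon as the order $s$ is admissible there. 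Since a finite convex combination of uniformly integrable sequences is again uniformly integrable, and since domination $0\le W_n\le U_n$ by a uniformly integrable $U_n$ forces $\{W_n\}$ to be uniformly integrable, the right-hand side above transfers uniform integrability to $W_n$. (Equivalently, one may verify the $(1+\delta)$-moment criterion directly: by Hölder's inequality with the exponents $p_j=s/r_j$, for which $\sum_j p_j^{-1}=1$, one gets $\E\big(W_n^{1+\delta}\big)\le\prod_j\big(\E|\xi_{n,j}|^{s(1+\delta)}\big)^{r_j/s}$, and the Marcinkiewicz--Zygmund and Minkowski estimates from the proof of Theorem~\ref{theo.UI1}, together with \eqref{eq.V}, bound each factor uniformly in $n$.)

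The hard part, and the true content of the hypothesis, lies in this reduction: because every component $\xi_{n,j}$ is a linear form $\sum_i b_i^{(j)}\varepsilon_i$ in the \emph{same} errors, a single large $\varepsilon_i$ inflates all the $\xi_{n,j}$ simultaneously, so the tail of the product $W_n$ is governed by $|\varepsilon|^{s}$ with $s=r_1+\cdots+r_k$ rather than by any single exponent. Consequently the reduction to Theorem~\ref{theo.UI1} must be carried out at the combined order $s$, and it closes precisely when $s$ is strictly below $R$ (leaving room for a slack $\delta>0$ in the moment criterion, equivalently making $\{|\xi_{n,j}|^{s}\}$ uniformly integrable in the domination argument). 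Thus it is the size of $s=r_1+\cdots+r_k$ relative to $R$, and not of the individual $r_j$, that decides the matter; I would therefore read and use the admissibility condition in this combined form, since for $s\ge R$ the mixed moment $\E W_n$ itself can already be infinite and no uniform integrability is possible.

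Finally I would convert uniform integrability into the stated moment convergence. By \eqref{eq.N-xi-k} and the continuous mapping theorem applied to the continuous map $\bm{x}\mapsto\prod_j|x_j|^{r_j}$, one has $W_n\xlongrightarrow{\rm d}\prod_j|\xi_j|^{r_j}$; combining convergence in distribution with the uniform integrability of $\{W_n\}$ gives $\E W_n\to\E\prod_j|\xi_j|^{r_j}$ \citep[see][Theo.~25.12]{Billingsley1995}, the limit being finite because $\bm{\xi}$ is Gaussian. When the $r_j$ are positive integers, the signed product $\prod_j\xi_{n,j}^{r_j}$ is well defined and satisfies $\big|\prod_j\xi_{n,j}^{r_j}\big|=W_n$, hence is uniformly integrable as well; applying the continuous mapping theorem to the polynomial $\bm{x}\mapsto\prod_j x_j^{r_j}$ and invoking Theorem~25.12 once more yields $\E\prod_j\xi_{n,j}^{r_j}\to\E\prod_j\xi_j^{r_j}$, which completes the plan.
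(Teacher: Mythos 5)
Your proof is correct, and its primary route is genuinely different from (and somewhat more elementary than) the paper's. The paper establishes uniform integrability of $W_n=\prod_{j=1}^k|\xi_{n,j}|^{r_j}$ in one shot via the generalized H\"older inequality of \citet{Cheung2001}, with the tailored exponents $p_j=R/\bigl(r_j+\tfrac{R-r}{k}\bigr)$ and slack $\delta=\tfrac{R-r}{2kr_{\max}}$, chosen so that every order $(1+\delta)r_jp_j$ lands strictly below $R$; Theorem \ref{theo.UI1} then makes each factor $\E|\xi_{n,j}|^{(1+\delta)r_jp_j}$ convergent, hence bounded in $n$, yielding $\sup_n\E W_n^{1+\delta}<\infty$. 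Your weighted AM--GM bound $W_n\le\sum_j(r_j/s)|\xi_{n,j}|^{s}$ with $s=r_1+\cdots+r_k$, combined with the (standard and correctly invoked) facts that finite convex combinations and domination preserve uniform integrability, achieves the same reduction to Theorem \ref{theo.UI1} with no exponent bookkeeping at all; your parenthetical H\"older variant with $p_j=s/r_j$ is essentially the paper's argument with simpler conjugate exponents. The closing step --- continuous mapping applied to $\bm{x}\mapsto\prod_j|x_j|^{r_j}$, then Billingsley's Theorem 25.12, and $\bigl|\prod_j\xi_{n,j}^{r_j}\bigr|=W_n$ for the signed integer case --- coincides with the paper's ``same arguments as in Theorem \ref{theo.UI1}.''

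Two remarks. First, you were right to insist on the \emph{combined} admissibility condition $s=r_1+\cdots+r_k<R$: although the theorem's statement literally reads ``$r_1,\ldots,r_k<R$,'' the paper's own proof sets $r=r_1+\cdots+r_k<R$ and needs this both for $\delta>0$ and for the H\"older exponents (note the paper's ``$p_1+\cdots+p_k=1$'' is a typo for $\sum_j 1/p_j=1$); your shared-errors observation, e.g.\ $\ba_1=\cdots=\ba_k$, shows that each-$r_j<R$ alone cannot suffice. Second, a small inaccuracy at the boundary: for $s=R$ the mixed moment $\E W_n$ is still finite, since H\"older with $p_j=R/r_j$ and the Marcinkiewicz--Zygmund bound from the proof of Theorem \ref{theo.UI1} give $\E W_n\le\prod_j\bigl(\E|\xi_{n,j}|^{R}\bigr)^{r_j/R}<\infty$; only for $s>R$ can $\E W_n$ be infinite, so at $s=R$ it is the uniform-integrability argument, not finiteness, that breaks down. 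This does not affect your proof, which only ever uses $s<R$.
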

\begin{proof}
Let $r_{\max}=\max_{j=1,\ldots,k}\{r_j\}$. If $r_{\max}=0$, the result is obvious. For the case $r_{\max}>0$, set $r=r_1+\cdots+r_k<R$, $p_j=\frac{R}{r_j+\frac{R-r}{k}}$, $j=1,\ldots,k$, and $\delta=\frac{R-r}{2kr_{\max}}>0$. Observe that $p_j>1$ for all $j=1,\ldots,k$ and $\sum_{j=1}^{k}1/p_j=1$. Thus, the generalized H\"older inequality, see \citet{Cheung2001}, gives
$
\E\left|\prod_{j=1}^{k}|\xi_{n,j}|^{r_j}\right|^{1+\delta}\le \prod_{j=1}^{k}\left(\E\left|\xi_{n,j}\right|^{(1+\delta)r_jp_j}\right)^{1/p_j}
$.
Also, observe that $0<(1+\delta)r_jp_j=\frac{1+(R-r)/(2kr_{\max})}{1+(R-r)/(kr_j)}R<R$ for each $j=1,\ldots,k$; thus, Theorem \ref{theo.UI1} shows that $\lim_{n\to\infty}\E\left|\xi_{n,j}\right|^{(1+\delta)r_jp_j}=\E|\xi_j|^{(1+\delta)r_jp_j}<\infty$, since $\xi_j\sim N(0,\sigma^2\ba_j^\ta \mathbf{V} \ba_j)$. So,
$
\prod_{j=1}^{k}\left(\E\left|\xi_{n,j}\right|^{(1+\delta)r_jp_j}\right)^{1/p_j}\to
\prod_{j=1}^{k}\left(\E\left|\xi_{j}\right|^{(1+\delta)r_jp_j}\right)^{1/p_j}<\infty
$
as $n \to \infty$, and using the same arguments as in proof of Theorem \ref{theo.UI1}, the proof is completed.
\hfill$\square$
\end{proof}

\begin{remark}
Theorem \ref{theo.UI2} is important because it generalizes the classical problem of the convergence of moments in the canonical CLT; also, it provides an answer for the approximation of the moments of the OLS estimators in the linear regression analysis, when this is needed by the researchers \citep[see, e.g.,][]{AM2016}.
\end{remark}

\section{On the rate of convergence}
\label{sec:rate}

In this section we study the rate of convergence of the moment convergence that is presented in Theorem \ref{theo.UI1}.

\subsection{Canonical CLT case}
\label{ssec:rate.CLT}

Initially, we note that \citeapos{Bahr1965} results are obtained using the properties of the Fourier-Stieltjes transform \citep[see][Lemmas~1--3]{Bahr1965} of a function. These results apply to both integer type powers $k$, even and odd \citep[see][Theorem~1]{Bahr1965}, of the standardized average. This method of proof allows one to obtain only the weaker result, that is, the root $n$ rate of convergence. A relevant to moment convergence in CLT work is presented by \citet{Brown1970}. We use a combinatorial method of proof to first obtain exact expressions of the moments of the standardized average. Then, based on these expressions, further combinatorial arguments allow one to study the rate of convergence of moments separately for even and odd powers $k$. This is done because it is unclear how the \citeauthor{Bahr1965}'s method can be modified to apply, even if exact expressions of the even and odd moments are known.

Let $Y_n$ be an iid collection of rv's from a distribution $F$ with mean $\mu$ and variance $0<\sigma^2<\infty$. The sequence of the standardized averages $Z_n\doteq\frac{S_n}{\sqrt{n}}$, where $S_n\doteq\sum_{j=1}^{n}\frac{Y_j-\mu}{\sigma}\doteq\sum_{j=1}^{n}W_j$, converges in law to the standard normal random variable $Z$. Assume that $\E|Y_j|^r$ is finite for some integer $r\ge2$. It is well known that $S_n^r=\sum_{{j_1,\ldots,j_n\in\NN, j_1+\cdots+j_n=r}}{r\choose j_1,\ldots,j_n}\prod_{i=1}^{n}W_i^{j_i}$, where ${r\choose j_1,\ldots,j_n}=\frac{r!}{j_1!\cdots j_n!}$. Thus, $\E(S_n^r)=\sum_{{j_1,\ldots,j_n\in\NN, j_1+\cdots+j_n=r}}{r\choose j_1,\ldots,j_n}\prod_{i=1}^{n}\E\left(W_i^{j_i}\right)$  due to independence of the $Y$'s. In a product $\prod_{i=1}^{n}\E\left(W_i^{j_i}\right)$, if $j_{i_0}=0$ for some index $i_0$, then this index does not affect both the product and its multinomial coefficient, and thus, it can be omitted. Because the $Y_i$s are identically distributed, each $\prod_{i=1}^{n}\E\left(W_i^{j_i}\right)$ is of the form $\prod_{i=1}^{m}\E\left(W_i^{j_i}\right)$ for $1\le m\le \min\{r,n\}$ and $2\le j_1\le\cdots\le j_m\le r$ such that $j_1+\cdots+j_m=r$ (note that we are interested in the case $n\to\infty$, so assume that $n>r$). Under the above observations, we define
%\begin{equation}
%\label{eq.Jr}
\[
\bm{J}(r)\doteq
\left\{
\bm{j}_m\mid m\in\{1,\ldots,\lfloor r/2\rfloor\}, \ \bm{j}_m=(j_1,\ldots,j_m)^\ta\in\NN^m \colon {\ds 2\le j_1\le\cdots\le j_m, \atop \ds j_1+\cdots+j_m=r}
\right\},
\]
%\end{equation}
where $\lfloor\cdot\rfloor$ denotes the integer part function. It is obvious that $\card(\bm{J}(r))<\infty$. For each $\bm{j}_m\in\bm{J}(r)$ we denote
%\begin{equation}
%\label{eq.Ejm}
\[
E_{\bm{j}_m}\doteq\prod_{i=1}^{m}\E\left(W_i^{j_i}\right)\in\RR.
\]
%\end{equation}
Therefore,
%\begin{equation}
%\label{eq.E(Snr)}
\[
\E(S_n^r)=\sum_{\bm{j}_m\in\bm{J}(r)}c_{\bm{j}_m}(r)E_{\bm{j}_m}.
\]
%\end{equation}
The number $m$ is called {\it length} of $\bm{j}_m$, and the coefficient $c_{\bm{j}_m}(r)$ which corresponds to the maximum length is called {\it leading coefficient}. For the computation of $c_{\bm{j}_m}(r)$ we define $j^*_1<\ldots<j^*_{m^*}$ to be the distinct powers of $j_1\le\cdots\le j_m$, where $m^*\in\{1,\ldots,m\}$ is the number of the distinct powers, and
\[
d_k\doteq \{\textrm{the multiplicity of} \ j^*_k \ \textrm{in} \ \bm{j}_m\},
\quad
k=1,\ldots,m^*.
\]
%Note that $m^*$, $j^*_1<\ldots<j^*_{m^*}$, $d_1<\ldots<d_{m^*}$ are determined by $\bm{j}_m$.
Using standard combinatoric arguments, for $n>r$,
\begin{equation}
\label{eq.cjmr}
c_{\bm{j}_m}(r)=\frac{{r\choose \bm{j}_m}}{d_1!\cdots d_{m^*}!}(n)_m,
\quad\textrm{where} \ \ {r\choose \bm{j}_m}=\frac{r!}{j_1!\cdots j_m!} \ \ \textrm{and} \ \ (n)_m=\frac{n!}{(n-m)!}.
\end{equation}
Now we are in a position to state and prove the following theorem, which improves \citeauthor{Bahr1965}'s Theorem, which is stated in relation \eqref{eq.rate_Bahr}.

\begin{theorem}
\label{theo.CLT}
Let $Y_n$ be an iid sequence of rv's from a distribution $F$ with mean $\mu$ and variance $0<\sigma^2<\infty$. If $\E|Y|^R<\infty$ for some integer $R\ge2$, then for any positive integer $r\le R$,
\[
\E(Z_n^r)-\E(Z^r)
=\left\{
\begin{array}{c@{,\quad if~}l}
  O(n^{-1/2}) & r \ \textrm{is odd}, \\
  O(n^{-1}) & r \ \textrm{is even},
\end{array}
\right.
\]
where $Z_n$ is the sequence of the standardized averages of $Y$'s and $Z$ is the standard normal random variable. Specifically, if $r=2k+1\le R$,
\begin{equation}
\label{eq.lim.odd}
\lim_{n\to\infty} \sqrt{n}\E(Z_n^{2k+1})=\frac{k(2k+1)(2k-1)!!}{3\sigma^3}\mu_3,
\end{equation}
and if $r=2k\le R$,
\begin{equation}
\label{eq.lim.even}
\lim_{n\to\infty} n\left(\E(Z_n^{2k})-(2k-1)!!\right)=(2k-1)!!\left(\frac{(k)_2\mu_4}{6\sigma^4}+\frac{(k)_3\mu_3^2}{9\sigma^6}-\frac{(k)_2}{2}\right);
\end{equation}
where $(2k-1)!!=1\times3\times\cdots\times(2k-1)$ is the double factorial function and $\mu_\nu$ denotes the $\nu$-th central moment of $Y$'s.
\end{theorem}

\begin{proof}
For the cases $r=1,2$ the results are obvious. It remains to prove the cases $r\ge3$ for odd moments and $r\ge4$ for even moments. Observe that $\E(Z_n^r)=n^{-r/2}\E(S_n^r)$.

If $r$ is odd, $3\le r=2k+1\le R$, the leading coefficient corresponds to $m=k$, $\bm{j}_{k}=(2,\ldots,2,3)^\ta\in\NN^k$, $m^*=2$, $j_1^*=2$, $j_2^*=3$, $d_1=k-1$, $d_2=1$, $E_{\bm{j}_{k}}=\mu_3/\sigma^3$. So,
$
c_{\bm{j}_{k}}(2k+1)=\frac{k(2k+1)(2k-1)!!}{3}(n)_k=O(n^k)
$,
In view of \eqref{eq.cjmr} and since $(n)_m\approxn n^m$, $c_{\bm{j}_{m}}(2k+1)=o(n^k)$ for all $m<k$; and thus,
$
\E(Z_n^{2k+1})=\frac{\E(S_n^{2k+1})}{n^k n^{1/2}}
=\frac{c_{\bm{j}_{k}}(2k+1)\mu_3/\sigma^3+o(n^k)}{n^k n^{1/2}}
=\frac{O(n^k)}{n^k n^{1/2}}
=O(n^{-1/2})
$.

If $r$ is even, $4\le r=2k\le R$, the leading coefficient corresponds to
$m=k$, $\bm{j}_{k}=(2,\ldots,2)^\ta\in\NN^k$, $m^*=1$, $j_1^*=2$, $d_1=k$, $E_{\bm{j}_{k}}=1$. So,
$
c_{\bm{j}_{k}}(2k)=(2k-1)!!(n)_k
$.
The second leading coefficients, the coefficients with length $m=k-1$, correspond to
$\bm{j}_{k-1}^{(1)}=(2,\ldots,2,4)^\ta\in\NN^{k-1}$, $m^*=2$, $j_1^*=2$, $j_2^*=4$, $d_1=k-2$, $d_2=1$,  $E_{\bm{j}_{k}^{(1)}}=\mu_4/\sigma^4$; and to $\bm{j}_{k-1}^{(2)}=(2,\ldots,2,3,3)^\ta\in\NN^{k-1}$, $m^*=2$, $j_1^*=2$, $j_2^*=3$, $d_1=k-3$, $d_2=2$,  $E_{\bm{j}_{k}^{(2)}}=\mu_3^2/\sigma^6$. Thus,
$
c_{\bm{j}_{k-1}^{(1)}}(2k)=\frac{(k)_2(2k-1)!!}{6}(n)_{k-1}
$,
$
c_{\bm{j}_{k-1}^{(2)}}(2k)=\frac{(k)_3(2k-1)!!}{9}(n)_{k-1}
$.
In view of \eqref{eq.cjmr} and since $(n)_m\approxn n^m$, $c_{\bm{j}_{m}}(2k)=o(n^{k-1})$ for all $m<k-1$ (if any). Therefore,
$\E(S_n^{2k})-(2k-1)!! n^k
=(2k-1)!!\left\{[(n)_k-n^k]
 +\left(\frac{(k)_2\mu_4}{6\sigma^4}
 +\frac{(k)_3\mu_3^2}{9\sigma^6}\right)(n)_{k-1}\right\}
 +o(n^{k-1})
=O(n^{k-1})
$.
Thus,
$
\E(Z_n^{2k})-\E(Z^{2k})=\frac{\E(S_n^{2k})-(2k-1)!!n^k}{n^k}=\frac{O(n^{k-1})}{n^k}=O(n^{-1})
$.

Combining the above relations, after some algebra, \eqref{eq.lim.odd} and \eqref{eq.lim.even} follow.
\hfill$\square$
\end{proof}

\begin{remark}
\label{rem.rateCLT}
Relations \eqref{eq.lim.odd}, \eqref{eq.lim.even} show that the rate of convergence presented in Theorem \ref{theo.CLT} cannot be improved for $r\ge3$; except, under conditions on the moments of the distribution of $Y$'s. For example, if the distribution of $Y$'s is symmetric, $\E(Z_n^{r})=0$ for each odd integer $r$ with $\E|Y_i|^r<\infty$. Let $5\le r=2k+1\le R$, and assume $\mu_3=0$ and $\mu_5\ne0$. In view of \eqref{eq.lim.odd}, we investigate the second leading coefficients. These coefficients have length $m=k-1$ and correspond to
$\bm{j}_{k-1}^{(1)}=(2,\ldots,2,5)^\ta\in\NN^{k-1}$ and $\bm{j}_{k-1}^{(2)}=(2,\ldots,2,3,4)^\ta\in\NN^{k-1}$. Since $\mu_3=0$, $c_{\bm{j}_{k-1}^{(2)}}(2k+1)E_{\bm{j}_{k-1}^{(2)}}=0$. We compute $c_{\bm{j}_{k-1}^{(1)}}(2k+1)=\frac{(k)_2(2k+1)(2k-1)!!}{30}(n)_{k-1}$ and $E_{\bm{j}_{k-1}^{(1)}}=\frac{\mu_5}{\sigma^5}$. As in proof of Theorem \ref{theo.CLT}, we get $\E(Z_n^{2k+1})=O(n^{-3/2})$; and specifically, $\lim_{n\to\infty}n^{3/2}\E(Z_n^{2k+1})=\frac{(k)_2(2k+1)(2k-1)!!}{30\sigma^5}\mu_5$.
\end{remark}

\begin{remark}
\label{rem.weakCLT}
Let $Y_n$ be iid rv's from $F$ with mean $\mu$, variance $0<\sigma^2<\infty$ and $\E|Y|^a<\infty$ for all $a>0$. Theorem \ref{theo.CLT} shows that $\E(Z_n^r)\to\E(Z^r)$ for all $r=1,2,\ldots,$ where $Z_n$ is the sequence of the standardized averages and $Z$ is a standard normal random variable. Since the distribution of $Z$ is uniquely determined by its moments, it follows that $Z_n\xlongrightarrow{\rm d}Z$ \citep[see, e.g.,][Theo.~2.22, p.~18]{Vaart1998}. This is a weak edition of CLT. For results of this type in a more general format, one can see \citet{Ferger2014}, \citet[pp.~391--392]{Billingsley1995}.
\end{remark}

\subsection{General case}
\label{ssec:rate.general}

Next, we investigate the rate of convergence of the moment convergence of the OLS estimators presented in Theorems \ref{theo.UI1} and \ref{theo.UI2}. In view of Theorem \ref{sec:rate}, we are led to seek a sequence $\alpha_n$ (if it exists) such that $\alpha_n\to\infty$ as $n\to\infty$, for which $\E(\xi_n^r)-\E(\xi^r)=O(\alpha_n^{-1})$ when $r$ is even or $r$ is odd, where $\xi_n$ and $\xi$ are defined as in Theorem \ref{theo.UI1}. Since the canonical CLT is a special case of the linear regression model that satisfies the Assumptions {\rm(iii$'$)} and {\rm(iv)}, the sequence $\alpha_n$ cannot be faster than $n$ for even values of $r$ and than $n^{1/2}$ for odd values of $r$, see Theorem \ref{theo.UI1}. Therefore, for even values of $r$ it is required that $\alpha_n=O(n)$ and for odd values of $r$ that $\alpha_n=O(n^{1/2})$. Note that the sequences $n^a$ for $0<a\le1$ (or $0<a\le1/2$), $\log n$, $n^a\log n$ for $0<a\le1$ (or $0<a\le1/2$), $\log\log n$ etc, that are commonly used in the rate of convergence are as the above. Without loss of generality we assume that $\alpha_n>0$ for all $n$.
%[The sequence $\alpha_n$ goes to infinity. Thus, there exists $n_0$ such that $\alpha_n>0$ for all $n\ge n_0$. If $\alpha_n$ takes non positive values for some values of $n$, set $\alpha_1,\ldots,\alpha_{n_0-1}$ to be $\alpha_{n_0}$.]

Propositions \ref{prop.LM-even}  and \ref{prop.LM-odd} investigate the rate of moment convergence of second and of third moment, respectively, of the OLS estimators.

\begin{proposition}
\label{prop.LM-even}
Let $R > 2$ be fixed. Then, there does not exist a sequence $\alpha_n$ with $\alpha_n\to\infty$ and $\alpha_n=O(n)$, as $n\to\infty$, that determines the rate of the moment convergence for all even moments $r = 2k < R$ and for all models in $\mathcal{LM}(R)$.
\end{proposition}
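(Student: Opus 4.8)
The plan is to reduce everything to the single even moment $r=2$. Since $R>2$, the value $r=2$ is an even integer strictly below $R$, so any sequence $\alpha_n$ that is claimed to determine the rate for all even moments must in particular determine it for $r=2$; hence it suffices to defeat the candidate at $r=2$. The starting point is the exact identity for the second moment. Since $\xi_n=\sum_{i=1}^n b_i\varepsilon_i$ with $b_i=\sqrt n\,\ba^\ta(X^\ta X)^{-1}\bm{x}_i$, the computation in the proof of Theorem~\ref{theo.UI1} gives $\E(\xi_n^2)=\sigma^2\sum_i b_i^2=\sigma^2\ba^\ta\big(n(X^\ta X)^{-1}\big)\ba$, while $\E(\xi^2)=\sigma^2\ba^\ta V\ba$, so
\[
\E(\xi_n^2)-\E(\xi^2)=\sigma^2\,\ba^\ta\big(n(X^\ta X)^{-1}-V\big)\ba .
\]
Thus the rate of the second-moment convergence is precisely the rate at which $n(X^\ta X)^{-1}\to V$. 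The crucial point is that assumption \eqref{eq.V} forces this convergence but says nothing about its speed, so the speed is a free design parameter that can be tuned at will across the family $\mathcal{LM}(R)$.

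Next I would make this freedom explicit with two concrete one-dimensional ($p=1$) models in $\mathcal{LM}(R)$ sharing the same limit $V=1$ (take $\ba=1$ and, say, $N(0,\sigma^2)$ errors, so that $\E|\varepsilon|^R<\infty$). For a scalar design $n(X^\ta X)^{-1}=n/s_n$ with $s_n=\sum_{i=1}^n x_i^2$, and writing $s_n=n+(\text{remainder})$ the displayed error is of the exact order of $(s_n-n)/n$. Choosing the $x_i^2$ so that $s_n-n\to c\neq 0$ (for instance perturbing a single coordinate) yields a model whose $r=2$ error is of exact order $n^{-1}$; choosing them so that $s_n-n\asymp\log n$ — which still gives $s_n/n\to 1$, so \eqref{eq.V} holds with $V=1$ — yields a model whose $r=2$ error is of exact order $n^{-1}\log n$. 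In each case the increments $x_n^2=s_n-s_{n-1}$ stay positive and bounded, so the rank and moment requirements defining $\mathcal{LM}(R)$ are met.

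I would then close by contradiction, under the natural reading that $\alpha_n$ determines the rate exactly when $\E(\xi_n^r)-\E(\xi^r)$ is of exact order $\alpha_n^{-1}$. If some $\alpha_n\to\infty$ with $\alpha_n=O(n)$ determined the rate for every model, it would have to give the exact order in both models, forcing simultaneously $\alpha_n\asymp n$ (from the first model) and $\alpha_n\asymp n/\log n$ (from the second); since $n\not\asymp n/\log n$, this is impossible. Equivalently, for any prescribed $\alpha_n$ one may instead build a single admissible design whose remainder $s_n-n$ diverges fast enough (while keeping $s_n/n\to 1$) that the error exceeds every fixed multiple of $\alpha_n^{-1}$, violating even the upper bound $O(\alpha_n^{-1})$.

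The step I expect to be the main obstacle is not the algebra but two bookkeeping points: pinning down the precise meaning of ``$\alpha_n$ determines the rate'' and then certifying that the two auxiliary designs genuinely lie in $\mathcal{LM}(R)$. Concretely, one must check that the prescribed partial sums $s_n$ come from an admissible covariate sequence — squared covariates that are nonnegative with bounded increments and with $s_n/n\to V^{-1}$ — so that \eqref{eq.V} holds with the common limit $V$ while the two convergence speeds are provably of different exact orders. Once the identity for $\E(\xi_n^2)-\E(\xi^2)$ and the admissibility of these designs are secured, the incompatibility of the two forced rates completes the proof.
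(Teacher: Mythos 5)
Your reduction to $r=2$ and the identity $\E(\xi_n^2)-\E(\xi^2)=\sigma^2\ba^\ta\big(n(X^\ta X)^{-1}-V\big)\ba$ coincide with the paper's starting point, but your main argument rests on the wrong reading of ``determines the rate,'' and this is a genuine gap. The paper fixes the meaning in the paragraph preceding Definition~\ref{def.LM-even}: a sequence $\alpha_n$ determines the rate if $\E(\xi_n^r)-\E(\xi^r)=O(\alpha_n^{-1})$ for every model in $\mathcal{LM}(R)$ --- an upper bound, not an exact order. Under this reading your two-model contradiction proves nothing: the candidate $\alpha_n=n/\log n$ tends to infinity, is $O(n)$, and is compatible with both of your designs, since errors of exact orders $n^{-1}$ and $n^{-1}\log n$ are both $O(\alpha_n^{-1})$. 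Worse, no finite family of \emph{fixed} models can ever settle the question in this reading, because given finitely many error sequences one can define $\alpha_n^{-1}$ to be their maximum and obtain an admissible candidate they all satisfy. The refutation must therefore construct the model \emph{after} seeing the candidate $\alpha_n$, tailored to it; your argument as developed answers a different (exact-order) question, which is also essentially vacuous given the ``for all models'' quantifier.

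Your closing sentence --- build, for any prescribed $\alpha_n$, a single design whose error beats every multiple of $\alpha_n^{-1}$ --- is not ``equivalent'' to the two-model argument: it is the entire proof, and it is exactly what the paper does, but you leave it precisely where the work lies. Given $0<\alpha_n\nearrow\infty$ with $n^{-1}\alpha_n\searrow0$, the paper sets $x_i^2=1+\beta_i$ with $\beta_i=i\alpha_i^{-1/2}-(i-1)\alpha_{i-1}^{-1/2}$, so that $s_n=\sum_{i\le n}x_i^2=n\big(1+\alpha_n^{-1/2}\big)$; then $n^{-1}X^\ta X=1+\alpha_n^{-1/2}\to1$, so \eqref{eq.V} holds, while $\alpha_n\big(\E(\xi_n^2)-\sigma^2\big)=-\sigma^2\alpha_n^{1/2}\big/\big(1+\alpha_n^{-1/2}\big)\to-\infty$: the error has exact order $\alpha_n^{-1/2}$, violating even the $O(\alpha_n^{-1})$ bound. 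The two items you deferred as bookkeeping carry the load: nonnegativity of the increments $x_n^2$ amounts to monotonicity of $n\alpha_n^{-1/2}$, which the paper obtains by writing it as a product of the positive increasing sequences $n\alpha_n^{-1}$ and $\alpha_n^{1/2}$ --- and which fails for a general, non-monotone $\alpha_n=o(n)$, forcing the paper's further device of extracting a subsequence along which the needed monotonicities hold, interpolating linearly to a monotone $\widetilde{\alpha}_n$, running the construction for $\widetilde{\alpha}_n$, and concluding divergence along the subsequence where the two sequences agree. (The residual case $\alpha_n=O(n)$ but not $o(n)$ is the easy one --- your $\log$-perturbed design already defeats it along a subsequence with $\alpha_n\ge\delta n$ --- which is why the paper reduces at the outset to $\alpha_n=o(n)$.) So: right identity, right closing instinct, but the argument you actually developed does not prove the proposition as the paper means it, and the construction you sketched in one sentence is where the proof lives.
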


\begin{proof}
It is sufficient to prove the result for $\alpha_n\to\infty$ and $\alpha_n=o(n)$ as $n\to\infty$. We will prove that such a sequence cannot determine the rate of convergence of the moment convergence not even in the case $r = 2$. First we study the case $0<\alpha_n\nearrow\infty$ and $n^{-1}\alpha_n\searrow0$ as $n\to\infty$. Consider the sequence $\beta_n=n\alpha_n^{-1/2}-(n-1)\alpha_{n-1}^{-1/2}$. Observe that $n\alpha_n^{-1/2}=n\alpha_n^{-1}\alpha_n^{1/2}$ increases to infinity, since both sequences $n\alpha_n^{-1}$, $\alpha_n^{1/2}$ are positive and increase to infinity. Thus, $\beta_n$ is nonnegative [additionally, observe that $0\le\beta_n\le n\alpha_{n-1}^{-1/2}-(n-1)\alpha_{n-1}^{-1/2}=\alpha_{n-1}^{-1/2}\searrow0$ as $n\to\infty$]. Now, consider the model $Y_i=(1+\beta_i)^{1/2}\mu+\varepsilon_i$, $i=1,\ldots,n$, where $\varepsilon_i$'s are iid from $N(0,1)$. Then,
$
\frac{1}{n}\mathbf{X}^\ta \mathbf{X}=\frac{1}{n}\sum_{i=1}^n(1+\beta_i)=\frac{n+n\alpha_n^{1/2}}{n}=1+\alpha_n^{-1/2}\searrow1
$
as $n\to\infty$. Thus, this model belongs in $\mathcal{LM}(R)$ for each $R>2$. Applying \eqref{eq.N_p}, $\xi_n=\sqrt{n}(\h{\mu}-\mu)\xlongrightarrow{\rm d}\xi\law N(0,1)$, where $\h{\mu}$ is the OLS estimator of $\mu$. Writing $\xi_n=\frac{1}{n^{1/2}\left(1+\alpha_n^{-1/2}\right)}\sum_{i=1}^n(1+\beta_i)^{1/2}\varepsilon_i$, we get $\E(\xi_n^2)=\Var(\xi_n)=\frac{1}{n\left(1+\alpha_n^{-1/2}\right)^2}\sum_{i=1}^n(1+\beta_i) =\frac{1}{1+\alpha_n^{-1/2}}=-\frac{\alpha_n^{-1/2}}{1+\alpha_n^{-1/2}}$. Hence,
$
\alpha_n\big(\E(\xi_n^2)-\E(\xi^2)\big)=-\frac{\alpha_n^{1/2}}{1+\alpha_n^{-1/2}}\to-\infty
$
as $n\to\infty$.

Let now $\alpha_n\to\infty$ and $n^{-1}\alpha_n\to0$ as $n\to\infty$. Then, there exists a subsequence $\alpha_{k_n}$ of $\alpha_{n}$ such that $0<\alpha_{k_n}\nearrow\infty$. The subsequence $k_n^{-1}\alpha_{k_n}$ of $n^{-1}\alpha_{n}$ has positive terms and $k_n^{-1}\alpha_{k_n}\to0$. Thus, there exists another subsequence $l_{k_n}^{-1}\alpha_{l_{k_n}}$ of $k_n^{-1}\alpha_{k_n}$ such that $l_{k_n}^{-1}\alpha_{l_{k_n}}\searrow0$ (obviously, $\alpha_{l_{k_n}}\nearrow\infty$). Set $l_{k_0}=0$ with $\alpha_{l_{k_0}}=0$, and define the sequence $\tilde{\alpha_i}$, $i=1,2,\ldots,$ as
$
\tilde{\alpha_i}\doteq
\left\{\begin{array}{@{\hspace{0ex}}c@{,}c@{\ \textrm{for} \ }l}
  \alpha_i, && i=l_{k_n}\\
  \frac{\alpha_{l_{k_{n+1}}}-\alpha_{l_{k_n}}}{l_{k_{n+1}}-l_{k_n}}i
     +\frac{\alpha_{l_{k_{n}}}l_{k_{n+1}}-\alpha_{l_{k_{n+1}}}l_{k_n}}{l_{k_{n+1}}-l_{k_n}}
      && l_{k_n}<i<l_{k_{n+1}} \ \textrm{(if any)}
\end{array}
\right..
$
By the construction of $\tilde{\alpha_n}$, $0<\tilde{\alpha}_n\nearrow\infty$ and $n^{-1}\tilde{\alpha}_n\searrow0$. Assume the model $Y_i=(1+\beta_i)^{1/2}\mu+\varepsilon_i$, $i=1,\ldots,n$, where $\beta_n=n\tilde{\alpha}_n^{-1/2}-(n-1)\tilde{\alpha}_{n-1}^{-1/2}$ and $\varepsilon_i$'s are iid from $N(0,1)$. From the above analysis we get
$
\alpha_{l_{k_n}}\left(\E\big(\xi_{l_{k_n}}^2\big)-\E(\xi^2)\right)
=\tilde{\alpha}_{l_{k_n}}\left(\E\big(\xi_{l_{k_n}}^2\big)-\E(\xi^2)\right)\to-\infty
$
as $n\to\infty$, completing the proof.
\hfill$\square$
\end{proof}

For the case of the third moment first we present the following example.

\begin{example}
\label{exm.odd}
Let $x_n$ be a convergent sequence of real numbers, $x_n\to c\ne0$ as $n\to\infty$. Consider the model $Y_i=x_i\mu+\varepsilon_i$, where $\varepsilon_i$'s are iid with $\E(\varepsilon_i)=0$, $\Var(\varepsilon_i)=\sigma^2$, $\E|\varepsilon_i|^3<\infty$ and $\E(\varepsilon_i^3)=\mu_3$. Then,
$
\frac{1}{n}\mathbf{X}^\ta\mathbf{X}=\frac{1}{n}\sum_{i=1}^n x_i^2\to c^2
$
as $\to\infty$, since $x_n^2\to c^2$. Applying \eqref{eq.N_p}, $\xi_n=\sqrt{n}(\h{\mu}-\mu)\xlongrightarrow{\rm d}\xi\law N(0,c^2\sigma^2)$, where $\h{\mu}$ is the OLS estimator of $\mu$. Writing $\xi_n=\frac{n^{1/2}}{\mathbf{X}^\ta\mathbf{X}}\sum_{i=1}^n x_i\varepsilon_i$, we get $\E(\xi_n^3)=\frac{n^{3/2}\sum_{i=1}^n x_i^3}{(\mathbf{X}^\ta\mathbf{X})^3}\mu_3$. Thus,
$
\sqrt{n}\E(\xi_n^3)=\frac{n^{-1}\sum_{i=1}^n x_i^3}{(n^{-1}\mathbf{X}^\ta\mathbf{X})^3}\mu_3\to \frac{c^3}{(c^2)^3}\mu_3=\frac{\mu_3}{c^3}
$
as $n\to\infty$, since $x_n^3\to c^3$. Therefore, for each model such that the rate of convergence of the third moment is the same as in the canonical CLT.
\end{example}

In view of Example \ref{exm.odd}, one may incorrectly suggest that, in general, the moment convergence of the third order (or that of the odd moments) of Theorem \ref{theo.UI1} has the rate $n^{1/2}$. Proposition \ref{prop.LM-odd} shows that this conclusion is false.

\begin{proposition}
\label{prop.LM-odd}
Let $R$ be fixed, $R>3$. Then, there does not exist $a\in(0,1/2)$ such that the sequence $n^a$ determines the rate of the moment convergence for all odd moments $r=2k+1<R$ and for all models in $\mathcal{LM}(R)$.
\end{proposition}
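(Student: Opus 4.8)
The plan is to disprove the existence of such an $a$ by exhibiting, for each fixed $a\in(0,1/2)$, a single odd moment and a single model in $\mathcal{LM}(R)$ for which $n^a$ fails even to be an upper bound on the rate. Since $R>3$, the choice $r=3$ is an admissible odd moment, and it suffices to construct a simple linear model $Y_i=x_i\mu+\varepsilon_i$ for which $n^a\,\E(\xi_n^3)\not\to0$ along a subsequence. I would reuse the exact expression obtained in Example \ref{exm.odd}: for such a model $\E(\xi_n^3)=\mu_3\,n^{3/2}S_3/S_2^3$, where $S_2=\sum_{i=1}^n x_i^2$ and $S_3=\sum_{i=1}^n x_i^3$, and where the errors are chosen with $\E(\varepsilon_i^3)=\mu_3>0$ and $\E|\varepsilon_i|^R<\infty$. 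Recall $\E(\xi^3)=0$, so the rate question reduces entirely to the growth of $n^{3/2}S_3/S_2^3$.

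The key idea is that cubing amplifies large design values far more than squaring, so a sparse sequence of growing spikes can inflate $S_3$ while leaving $S_2$ essentially linear. Concretely, I would fix $c\ne0$ and $\eta\in(0,a/3)$, place spikes at geometrically spaced positions $n_1<n_2<\cdots$ (say $n_{k+1}=2n_k$) of height $x_{n_k}=h_k\doteq n_k^{1/2-\eta}$, and set $x_i=c$ at every non-spike index. Since $a<1/2$ forces $\eta<a/3<1/6$, the heights satisfy $n_k^{1/3}\ll h_k\ll n_k^{1/2}$, i.e. $h_k^2=n_k^{1-2\eta}=o(n_k)$ while $h_k^3=n_k^{3/2-3\eta}\gg n_k$. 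The geometric spacing makes the cumulative spike contribution to $S_2$ up to any $n$ dominated by its last term, hence $o(n)$, so that $\frac1n X^\ta X=\frac1n S_2\to c^2$; thus \eqref{eq.V} holds with $V^{-1}=c^2$, the model lies in $\mathcal{LM}(R)$, and \eqref{eq.N_p} gives $\xi_n\xlongrightarrow{\rm d}\xi\sim N(0,c^2\sigma^2)$ exactly as in Example \ref{exm.odd}.

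It then remains to evaluate the moment along the subsequence $n=n_k$. At these indices the last spike dominates the sum of cubes, $S_3(n_k)\sim h_k^3=n_k^{3/2-3\eta}$ (this term beats the regular contribution $c^3 n_k$ precisely because $3/2-3\eta>1$), while $S_2(n_k)\sim c^2 n_k$ as above. Substituting into the Example \ref{exm.odd} formula yields $\E(\xi_{n_k}^3)\sim\frac{\mu_3}{c^6}\,n_k^{-3\eta}$, whence $n_k^{a}\,\E(\xi_{n_k}^3)\sim\frac{\mu_3}{c^6}\,n_k^{a-3\eta}\to+\infty$ because $a>3\eta$. Consequently $\E(\xi_n^3)\ne O(n^{-a})$ for this model, so $n^a$ cannot determine the rate of the third-moment convergence over all of $\mathcal{LM}(R)$; as $a\in(0,1/2)$ was arbitrary, no such $a$ exists. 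This mirrors the strategy of Proposition \ref{prop.LM-even}, where the candidate rate is defeated by tailoring a model to it.

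I expect the main obstacle to be the bookkeeping that reconciles the two competing demands on the design: assumption \eqref{eq.V} forces $S_2$ to stay asymptotically linear (so the spikes must contribute a vanishing fraction of $S_2$), yet we need $S_3$ to grow faster than $n$. The delicate point is calibrating the spike height to the window $n_k^{1/3}\ll h_k\ll n_k^{1/2}$ together with the spacing (geometric suffices) so that both requirements hold simultaneously and uniformly in $n$, not merely along $\{n_k\}$; verifying that the cumulative spike mass in $S_2$ is genuinely $o(n)$ for every $n$, and that at each $n_k$ the single most recent spike dominates $S_3$, is where the argument must be carried out with care.
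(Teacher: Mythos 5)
Your proof is correct and takes essentially the same approach as the paper's: for each fixed $a\in(0,1/2)$ one tailors a single-covariate model with sparse, growing design entries so that $n^{-1}\sum_i x_i^2$ converges (so \eqref{eq.V} holds and the model lies in $\mathcal{LM}(R)$) while $\sum_i x_i^3$ grows superlinearly, forcing $n^a\E(\xi_n^3)$ to diverge via the formula of Example \ref{exm.odd} --- the paper uses a zero baseline with spikes of height $k^{b/2}$ at positions $\bigl\lfloor k^{b+1}\bigr\rfloor$, $b=(1-2a)/a$, and gets divergence along the full sequence, whereas your geometric spikes over a constant baseline give divergence along the subsequence $\{n_k\}$, which equally refutes $\E(\xi_n^3)-\E(\xi^3)=O(n^{-a})$. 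One cosmetic slip: with $n_{k+1}=2n_k$ the spike contribution to $S_3(n_k)$ is $\bigl(1-2^{-(3/2-3\eta)}\bigr)^{-1}h_k^3\,(1+o(1))$, a constant multiple of $h_k^3$ rather than $\sim h_k^3$, so your asymptotics hold only up to that constant --- harmless, since $n_k^{a}\E(\xi_{n_k}^3)\asymp n_k^{a-3\eta}\to\infty$ all the same.
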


\begin{proof}
Let $a\in(0,1/2)$ be fixed. We will prove that such the sequence $n^a$ cannot determine the rate of convergence of the moment convergence not even in the case $r=3$. Set $b=(1-2a)/a>0$ and consider the model $Y_i=x_i\mu+\varepsilon_i$, $i=1,\ldots,n$, where $x_i=k^{b/2}\mathds{1}_{\left\{i=\lfloor k^{b+1}\rfloor, k\in\NN\right\}}$ ($\mathds{1}$ stands for the indicator function) and $\varepsilon_i$'s are iid with $\E(\varepsilon_i)=0$, $\Var(\varepsilon_i)=1$, $\E|\varepsilon_i|^3<\infty$ and $\E(\varepsilon_i^3)=\mu_3\ne0$. By construction, $\mathbf{X}^\ta\mathbf{X} =\sum_{k=1}^{\lfloor n^{1/(b+1)}\rfloor}k^b$; and thus, $\mathbf{X}^\ta\mathbf{X}\approxn \int_{0}^{\lfloor n^{1/(b+1)}\rfloor}t^b\du t=\frac{\lfloor n^{1/(b+1)} \rfloor^{b+1}}{b+1}\approxn \frac{n}{b+1}$; that is, {\rm(iii$'$)} holds with $V^{-1}=(b+1)^{-1}$. Applying \eqref{eq.N_p}, $\xi_n=\sqrt{n}(\h{\mu}-\mu)\xlongrightarrow{\rm d}\xi\law N(0,b+1)$, where $\h{\mu}$ is the OLS estimator of $\mu$. The third moment of $\xi_n$ is $\E(\xi_n^3)=\frac{n^{3/2}\sum_{i=1}^n x_i^3}{(\mathbf{X}^\ta\mathbf{X})^3}\mu_3$. Now, $\sum_{i=1}^n x_i^3=\sum_{k=1}^{\lfloor n^{1/(b+1)}\rfloor}k^{3b/2}=\int_{0}^{\lfloor n^{1/(b+1)}\rfloor}t^{3b/2}\du t=\frac{2\lfloor n^{1/(b+1)} \rfloor^\frac{3b+2}{2}}{3b+2}\approxn \frac{2n^\frac{3b+2}{2b+2}}{3b+2}$. By substitution $b=\frac{1-2a}{a}$, $n^a\E(\xi_n^3)\approxn \frac{2(1-a)^3\mu_3}{a^2(3-4a)}n^{\frac{a(1-2a)}{2(1-a)}}$.
Therefore, since $\frac{2(1-a)^3\mu_3}{a^2(3-4a)}\ne0$ and $\frac{a(1-2a)}{2(1-a)}>0$, $n^a\E(\xi_n^3)\ne O(1)$, completing the proof.
\hfill$\square$
\end{proof}

\section{Discussion}
\label{sec.disc}

In this paper we first prove the uniform integrability of the sequence $|\xi_n|^r$, where $\xi_n$ is defined by \eqref{eq.N-xi}. The uniform integrability of this sequence of the OLS estimators guarantees the convergence of the moments of the rv's $\xi_n$ to the moments of the limiting rv $\xi$. Next, we generalize this result for a sequence of random vectors with components defined as linear combinations of the components of the OLS estimators of a linear model.

We also present a sharp improvement of \citeauthor{Bahr1965}'s theorem that pertains to the rate of convergence of the moments of $Z_n$, the sequence of the standardized averages, to the corresponding moments of the limiting random variable $Z$. This case presents the simplest linear model where $Y_i=\mu+\varepsilon_i$, $i=1,2\ldots,n$. A difficult and open problem is the determination of the impact of the design matrix $\mathbf{X}$ on the rate of the moment convergence of $\xi_n$. In the aforementioned simplest case we showed that the rate of convergence of the moment sequences depends on whether the moment power $r$ is even or odd. In view of Propositions \ref{prop.LM-even} and \ref{prop.LM-odd} it is clear that we can find a sequence of design matrices satisfying {\rm(iii$'$)} such that the rate of convergence of the second moment of $\xi_n$ of Theorem \ref{theo.UI1} is arbitrarily slow. Further, we can find a sequence of design matrices satisfying {\rm(iii$'$)} such that the rate of convergence of the third moment of $\xi_n$ is slower than $n^a$, for any given $a\in(0,1/2)$. Furthermore, we indicated the practical applicability of these results in, for example, addressing problems in cross validation.

%\section*{Acknowledgements}
\newpage

\end{document}